\DeclareFontFamily{U}{mathx}{\hyphenchar\font45}
\DeclareFontShape{U}{mathx}{m}{n}{
      <5> <6> <7> <8> <9> <10>
      <10.95> <12> <14.4> <17.28> <20.74> <24.88>
      mathx10
      }{}
\DeclareSymbolFont{mathx}{U}{mathx}{m}{n}
\DeclareMathAccent{\widecheck}{0}{mathx}{"71}
\newtheorem{theorem}{Theorem}[section]
\newtheorem{lemma}[theorem]{Lemma}
\newtheorem{proposition}[theorem]{Proposition}
\newtheorem{corollary}[theorem]{Corollary}
\newenvironment{customthm}[1]
  {\innercustomthm}
  {\endinnercustomthm}
\theoremstyle{definition}
\newtheorem{definition}[theorem]{Definition}
\newtheorem{example}[theorem]{Example}
\theoremstyle{remark}
\newcommand{\jjm}[1]{\textcolor{black}{#1}}   
\newcommand{\nw}[1]{\textcolor{black}{#1}}    
\newcommand{\jdm}[1]{\textcolor{black}{#1}}   
\newcommand{\nnw}[1]{\textcolor{black}{#1}}    
\title[]{Boundary Idempotents and $2$-precluster-tilting categories}
\author{Jordan McMahon}
\address{unaffiliated}
\email{jordanmcmahon37@gmail.com}
\begin{document}
\begin{abstract}
The homological theory of Auslander--Platzeck--Todorov on idempotent ideals laid much of the groundwork for higher Auslander--Reiten theory, providing the key technical lemmas for both higher Auslander correspondence as well as the construction of higher Nakayama algebras, among other results. Given a finite-dimensional algebra $A$ and idempotent $e$, we expand on a criterion of Jasso-K\"ulshammer in order to determine a correspondence between the $2$-precluster-tilting subcategories of $\mathrm{mod}(A)$ and $\mathrm{mod}(A/\langle e\rangle)$. This is then applied in the context of generalising dimer algebras on surfaces with boundary idempotent.
\end{abstract}

\maketitle

\section{Introduction}

Higher cluster-tilting subcategories were introduced and studied by Iyama \cite{iy3,iy1}. They remain only partially understood, and not easy to find in general. A generalisation to this are higher precluster-tilting subcategories, as introduced by Iyama and Solberg \cite{is}. These higher precluster-tilting subcategories are a weaker version of higher cluster-tilting subcategories, but which are of interest in their own right. 

A particular class of algebras where we would expect to find higher (pre)cluster-tilting subcategories are from a higher versions of dimer algebras on a disc the sense of \cite{bkm}. In recent work \cite{mw}, we were able to find higher precluster-tilting subcategories, as long as the boundary was omitted. In this work we give an inductive criterion to construct higher precluster-tilting subcategories, and show how this can be applied to boundary idempotents. This criterion is motivated by the following example.

\begin{example}
Given a semisimple algebra $A$ with two vertices $1$ and $3$, we may add a vertex $2$ together with arrows and relations, to produce a (pre)cluster-tilting subcategory in the following ways: we can form an Auslander algebra of type $A_2$:
$$\begin{tikzcd}\ &2\arrow{dr}\\
1\arrow{ur}\arrow[dotted]{rr}& &3\end{tikzcd}$$
which has a cluster-tilting subcategory given by $S_1,S_3$ and the projective-injectives. Alternatively, we can form the (self-injective) preprojective algebra $\Pi(A_3)$, obtained from $A_3$ be adding arrows $\overline{\alpha}:j\rightarrow i$ for $\alpha:i\rightarrow j\in Q$ with admissible ideal $I$ generated by $\sum_{\alpha\in Q_1}\alpha\overline{\alpha}- \overline{\alpha}\alpha.$

$$\begin{tikzcd} 1\arrow[shift left]{r}&2\arrow[shift left]{l}\arrow[shift left]{r}&3\arrow[shift left]{l}\end{tikzcd}$$
which has a precluster-tilting subcategory given by $S_1,S_3$ and the projective-injectives.
\end{example}

This inductive criterion is based on a result of Jasso--Külshammer (Lemma \ref{jku}), who needed an inductive approach to define higher Nakayama algebras, and show that they have higher cluster-tilting subcategories.

\begin{theorem}\label{elso}
Let $A$ be a finite-dimensional algebra and $e$ an idempotent of $A$ and $\tilde{\mathcal{C}}\subseteq \mathrm{mod}(A)$ a $2$-precluster-tilting subcategory. Let $\mathcal{C}\cong \tilde{\mathcal{C}}\cap\mathrm{mod}(A/\langle e\rangle)$.
If also
\begin{enumerate}[(i)]
\item $\mathrm{Hom}_A(A/\langle e\rangle,\tilde{\mathcal{C}})\subseteq\tilde{\mathcal{C}}$.
\item $A/\langle e\rangle\otimes_A \tilde{\mathcal{C}} \subseteq\tilde{\mathcal{C}}$.
\end{enumerate}
Then $\mathcal{C}\subseteq \mathrm{mod}(A/\langle e\rangle)$ is $2$-precluster tilting.
\end{theorem}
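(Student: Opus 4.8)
The plan is to check that $\mathcal{C}=\tilde{\mathcal{C}}\cap\mathrm{mod}(\bar A)$, where $\bar A:=A/\langle e\rangle$, satisfies the four defining properties of a $2$-precluster-tilting subcategory of $\mathrm{mod}(\bar A)$: it is functorially finite, it is a generator-cogenerator, it satisfies $\mathrm{Ext}^1_{\bar A}(\mathcal{C},\mathcal{C})=0$, and it is stable under the $2$-Auslander--Reiten translate $\tau_2=\tau\Omega$. I will work with the recollement $(i^*,i_*,i^!)$ attached to the ring surjection $A\to\bar A$: here $i_*$ is restriction of scalars (exact, fully faithful, with essential image the $A$-modules annihilated by $\langle e\rangle$), its left adjoint is $i^*=\bar A\otimes_A-=(-)/\langle e\rangle(-)$, its right adjoint is $i^!=\mathrm{Hom}_A(\bar A,-)$, and $i^*i_*\cong\mathrm{id}\cong i^!i_*$. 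In this language hypotheses (i) and (ii) say exactly that $\tilde{\mathcal{C}}$ is closed under $i^!$ and under $i^*$; since the images of these functors always lie in $\mathrm{mod}(\bar A)$, they in fact lie in $\mathcal{C}$.

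First I would dispose of the three ``soft'' conditions. Generator-cogenerator: every projective $\bar A$-module is a summand of some $\bar A^n=i^*(A^n)$, which lies in $\tilde{\mathcal{C}}$ by (ii) since $A\in\tilde{\mathcal{C}}$, hence in $\mathcal{C}$; dually, using $D\bar A\cong i^!(DA)$ and (i), every injective $\bar A$-module lies in $\mathcal{C}$. Rigidity: for $\bar A$-modules $X,Y$ the natural map $\mathrm{Ext}^1_{\bar A}(X,Y)\to\mathrm{Ext}^1_A(X,Y)$ is injective, because an $A$-linear splitting of a short exact sequence of $\bar A$-modules is automatically $\bar A$-linear; hence $\mathrm{Ext}^1_{\bar A}(\mathcal{C},\mathcal{C})\hookrightarrow\mathrm{Ext}^1_A(\mathcal{C},\mathcal{C})\subseteq\mathrm{Ext}^1_A(\tilde{\mathcal{C}},\tilde{\mathcal{C}})=0$. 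Functorial finiteness: given $X\in\mathrm{mod}(\bar A)$, take a right $\tilde{\mathcal{C}}$-approximation $f\colon\tilde C\to X$ in $\mathrm{mod}(A)$ and apply $i^!$; since $i^!X\cong X$ and $i^!\tilde C\in\mathcal{C}$ by (i), and since any $C'\to X$ with $C'\in\mathcal{C}\subseteq\tilde{\mathcal{C}}$ factors through $f$ in $\mathrm{mod}(A)$ and therefore, via the $(i_*,i^!)$-adjunction together with $i^!X\cong X$, through $i^!f$ in $\mathrm{mod}(\bar A)$, the morphism $i^!f$ is a right $\mathcal{C}$-approximation. The dual argument, applying $i^*$ to a left $\tilde{\mathcal{C}}$-approximation, shows $\mathcal{C}$ is covariantly finite.

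The substantive step is stability under $\tau_2$, and this is exactly where Lemma~\ref{jku} is needed. Since $\langle e\rangle$ is an idempotent ideal, for any $X\in\mathrm{mod}(\bar A)$ a projective cover over $A$ descends to a projective cover over $\bar A$, which gives $\Omega_{\bar A}X\cong i^*\Omega_A X$, and dually $\Omega^{-1}_{\bar A}X\cong i^!\Omega^{-1}_A X$ for the cosyzygy. Feeding this into the comparison of transposes and Auslander--Reiten translates over $A$ and $\bar A$ provided by Lemma~\ref{jku}, and noting that its hypotheses are met here --- the closure conditions (i), (ii) together with $\tilde{\mathcal{C}}$ being $2$-precluster tilting force $\bar A=i^*A\in\tilde{\mathcal{C}}$ and $D\bar A\cong i^!(DA)\in\tilde{\mathcal{C}}$, and the rigidity of $\tilde{\mathcal{C}}$ then supplies the $\mathrm{Ext}$-vanishing that the comparison requires --- one identifies $\tau_2^{\bar A}X$ with (a summand of) $i^!(\tau_2^A X)$ for $X\in\mathcal{C}$, and dually $\tau_2^{-}$ over $\bar A$ with $i^*$ of $\tau_2^{-}$ over $A$. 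Now $X\in\tilde{\mathcal{C}}$ and $\tilde{\mathcal{C}}$ is $2$-precluster tilting over $A$, so $\tau_2^A X\in\tilde{\mathcal{C}}$; applying (i) gives $i^!(\tau_2^A X)\in\tilde{\mathcal{C}}\cap\mathrm{mod}(\bar A)=\mathcal{C}$, whence $\tau_2^{\bar A}\mathcal{C}\subseteq\mathcal{C}$, and dually $\tau_2^{-}$ over $\bar A$ preserves $\mathcal{C}$; for the generator-cogenerator $\mathcal{C}$ this yields $\tau_2^{\bar A}\mathcal{C}=\mathcal{C}$. Combining the four conditions shows $\mathcal{C}$ is $2$-precluster tilting over $\bar A$.

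I expect the $\tau_2$-step to be the main obstacle. One must carefully track how projective presentations, syzygies, cosyzygies and the transpose functor transform under the base change $A\to\bar A$ --- in particular the spurious projective and injective summands that $i^*$ and $i^!$ can create or destroy, which only matter up to projectives and injectives for $\tau_2$ but must nonetheless be handled --- and, crucially, verify that hypotheses (i) and (ii) are genuinely what Lemma~\ref{jku} requires, rather than a blanket homological condition such as $\mathrm{pd}_A\bar A\le 1$ or $\mathrm{Ext}^{>0}_A(\bar A,\bar A)=0$; the memberships $\bar A\in\tilde{\mathcal{C}}$ and $D\bar A\in\tilde{\mathcal{C}}$, and the rigidity identities they force, are the bridge across this gap. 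The remaining three conditions are routine once the recollement bookkeeping is set up.
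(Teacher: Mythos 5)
Your treatment of (P1) and (P3) is fine and essentially matches the paper: the generator--cogenerator property comes from $\bar A=i^*A\in\tilde{\mathcal{C}}$ and $D\bar A=i^!(DA)\in\tilde{\mathcal{C}}$, and your rigidity argument (injectivity of the change-of-rings map on $\mathrm{Ext}^1$, since an $A$-splitting of an extension of $\bar A$-modules is automatically $\bar A$-linear) is if anything more elementary than the paper's route through Proposition \ref{aptii}. The functorial-finiteness check is superfluous for the definition used here (a $2$-precluster-tilting subcategory is $\mathrm{add}$ of a single module), but harmless.

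The genuine gap is the $\tau_2$-step, which you correctly identify as the substantive one but do not actually prove. You assert that the ``comparison of transposes and Auslander--Reiten translates over $A$ and $\bar A$'' is ``provided by Lemma~\ref{jku}''. It is not: Lemma~\ref{jku} is the Jasso--K\"ulshammer restriction statement for $d$-\emph{cluster}-tilting subcategories, its hypotheses include that every indecomposable of $\tilde{\mathcal{C}}$ outside $\mathrm{mod}(\bar A)$ is projective-injective (neither assumed nor implied by (i) and (ii)), and its conclusion contains no identity relating $\tau_2^{\bar A}$ to $i^!\circ\tau_2^{A}$. So the key identification ``$\tau_2^{\bar A}X$ is a summand of $i^!(\tau_2^A X)$'' is left unproved. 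The paper's actual mechanism is the Auslander--Platzeck--Todorov machinery: since $\bar A\in\tilde{\mathcal{C}}$ and $\tilde{\mathcal{C}}$ is rigid, one has $\mathrm{Ext}^1_A(\bar A,N)=0$ for $N\in\mathcal{C}$, so Corollary \ref{mcapt} lets one apply $F=\mathrm{Hom}_A(\bar A,-)$ to a minimal injective coresolution $0\to N\to \tilde I_0\to\tilde I_1\to\tilde I_2$ over $A$ and obtain a (possibly non-minimal) injective coresolution over $\bar A$; one then tracks the extra summands and identifies $(\tau_2^-)_{\bar A}N$ as a summand of $F((\tau_2^-)_AN)$, which lies in $\mathcal{C}$ by hypothesis (i). Your isomorphisms $\Omega_{\bar A}X\cong i^*\Omega_AX$ and $\Omega^{-1}_{\bar A}X\cong i^!\Omega^{-1}_AX$ are correct (the former from $\langle e\rangle^2=\langle e\rangle$, the latter needing $\mathrm{Ext}^1_A(\bar A,X)=0$, which does hold), but they only handle the cosyzygy half of $\tau_2^-=\tau^-\Omega^{-1}$; the passage through $\tau^-$, i.e.\ comparing minimal injective copresentations and the inverse Nakayama functor over $A$ and $\bar A$, is exactly the content you have delegated to a lemma that does not supply it. To repair the proof, replace the appeal to Lemma~\ref{jku} by an explicit argument along the lines of Corollary \ref{mcapt}.
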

Conversely we may (inductively) construct 2-precluster-tilting subcategories.

\begin{theorem}\label{masod}Let $A$ be a finite-dimensional algebra, $e$ an idempotent of $A$ and $\mathcal{C}\subseteq \mathrm{mod}(A/\langle e \rangle)$ a $2$-precluster-tilting subcategory.
If also
\begin{enumerate}[(i)]
\item $\mathrm{Ext}^1_A(DA,A)=0$.
\item $Ae, D(eA)$ are projective-injective $A$-modules.
\item There is an equality of sets  $$\{X\in\mathcal{C}|\mathrm{Ext}^2_A(X,J)\ne0\ \forall J\in\mathrm{inj}(A/\langle1- e \rangle)\}=\{(\tau_2^-)_AP|P\in\mathrm{proj}(A)\setminus \mathrm{proj}(A/\langle e \rangle)\}.$$
\item There is an equality of sets    $$\{X\in\mathcal{C}|\mathrm{Ext}^2_A(A/\langle 1-e \rangle,X)\ne0\}=\{(\tau_2)_AI|I\in\mathrm{inj}(A)\setminus \mathrm{inj}(A/\langle e \rangle)\}.$$
\end{enumerate}
Then $\mathcal{C}\cup\mathrm{proj}(A)\cup\mathrm{inj}(A)=:\tilde{C}\subseteq \mathrm{mod}(A)$ is $2$-precluster tilting.
\end{theorem}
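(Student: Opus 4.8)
The plan is to check that $\tilde{C}:=\mathrm{add}\bigl(\mathcal{C}\cup\mathrm{proj}(A)\cup\mathrm{inj}(A)\bigr)$ fulfils the three defining conditions of a $2$-precluster-tilting subcategory of $\mathrm{mod}(A)$ in the sense of Iyama--Solberg: (1) it is a functorially finite generator-cogenerator; (2) it is $1$-rigid, i.e.\ $\mathrm{Ext}^1_A(\tilde{C},\tilde{C})=0$; and (3) $(\tau_2)_A$ and $(\tau_2^-)_A$ restrict to mutually inverse bijections between the indecomposable non-projective and non-injective objects of $\tilde{C}$, equivalently $(\tau_2)_A\tilde{C}\subseteq\tilde{C}\supseteq(\tau_2^-)_A\tilde{C}$. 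Throughout set $B:=A/\langle e\rangle$ and identify $\mathrm{mod}(B)$ with the full subcategory of $\mathrm{mod}(A)$ of modules annihilated by $e$. Since $\mathrm{proj}(B),\mathrm{inj}(B)\subseteq\mathcal{C}$, the genuinely new indecomposables of $\tilde{C}$ are the $A$-projectives in $\mathrm{proj}(A)\setminus\mathrm{proj}(B)$ and the $A$-injectives in $\mathrm{inj}(A)\setminus\mathrm{inj}(B)$, among which, by (ii), the summands of $Ae$ and $D(eA)$ are projective-injective.

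Condition (1) is immediate: $\mathrm{proj}(A),\mathrm{inj}(A)\subseteq\tilde{C}$, and $\tilde{C}=\mathrm{add}(M_{\mathcal{C}}\oplus A\oplus DA)$ for an additive generator $M_{\mathcal{C}}$ of $\mathcal{C}$, hence it is functorially finite. For (2) I would run through the pairs of indecomposable-summand types of $M_{\mathcal{C}}\oplus A\oplus DA$: pairs with a projective in the first argument or an injective in the second vanish; $\mathrm{Ext}^1_A(DA,A)=0$ is hypothesis (i); the summands $Ae$ and $D(eA)$, which by (ii) are injective and projective respectively, contribute no $\mathrm{Ext}^1$; and what remains is $\mathrm{Ext}^1_A$ among objects of $\mathrm{mod}(B)$, after cutting any leftover $A$-(co)module $N$ by the short exact sequence $0\to\langle e\rangle N\to N\to N/\langle e\rangle N\to0$ whose outer terms are $B$-modules or built from $Ae$ and hence controlled by (ii). For $B$-modules I would invoke the Auslander--Platzeck--Todorov analysis of the idempotent ideal $\langle e\rangle=AeA$: it is idempotent as an ideal, so $\mathrm{Tor}^A_1(B,B)=0$, and with the homological input of (ii) this yields $\mathrm{Ext}^i_A(X,Y)\cong\mathrm{Ext}^i_B(X,Y)$ for $X,Y\in\mathrm{mod}(B)$ in degrees $i\le 2$; hence the $1$-rigidity of $\tilde{C}$ follows from that of $\mathcal{C}$ over $B$. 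I would keep the degree-$2$ comparison in reserve, as it re-expresses the groups $\mathrm{Ext}^2_A(X,J)$ and $\mathrm{Ext}^2_A(A/\langle 1-e\rangle,X)$ of (iii)--(iv) in terms of $B$-side and $eAe$-side data.

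Condition (3) is the crux, and this is where (iii) and (iv) enter. The key input is a comparison of the $2$-Auslander--Reiten translate $(\tau_2)_A$ with its $B$-counterpart on objects of $\mathcal{C}$: replacing a minimal $B$-projective resolution of a $B$-module $X$ by a minimal $A$-projective resolution introduces correction terms supported on the boundary idempotent $e$, and the part of this correction living in homological degree $2$ — the part that feeds into $\tau_2$ — is detected precisely by the groups $\mathrm{Ext}^2_A(X,J)$ with $J\in\mathrm{inj}(A/\langle 1-e\rangle)$, and dually for injective coresolutions and $\mathrm{Ext}^2_A(A/\langle 1-e\rangle,X)$. From this I would extract: (a) $(\tau_2)_A$ sends a non-$A$-projective $X\in\mathcal{C}$ that does \emph{not} lie in the set of (iii) to a non-$A$-injective object of $\mathcal{C}$, compatibly with the $(\tau_2)_B$-bijection via the Ext-comparison; (b) for $X$ in the set of (iii), hypothesis (iii) presents $X=(\tau_2^-)_AP$ for a new projective $P$, and the correction analysis identifies $(\tau_2)_A X$ with the corresponding new injective; (c) dually, $(\tau_2)_A$ carries every new injective that is not projective-injective into $\mathcal{C}$ — which is exactly what (iv) says — and symmetrically $(\tau_2^-)_A$ of every new projective lies in $\mathcal{C}$, which is (iii); the projective-injective boundary summands are killed on the relevant side. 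Together (a)--(c) give $(\tau_2)_A\tilde{C}\subseteq\tilde{C}$ and, by the dual argument, $(\tau_2^-)_A\tilde{C}\subseteq\tilde{C}$; finally the \emph{equalities} of sets in (iii)--(iv) force these two maps to be mutually inverse bijections on indecomposables — the inclusion $\supseteq$ gives surjectivity onto the new projectives and injectives, the inclusion $\subseteq$ gives injectivity — which, together with the $(\tau_2)_B$-bijection internal to $\mathcal{C}$, completes (3).

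I expect the main obstacle to be precisely this $(\tau_2)_A$-versus-$B$-side comparison in (a)--(c): one must make the change-of-rings bookkeeping — in the spirit of Lemma~\ref{jku} and of the Auslander--Platzeck--Todorov idempotent-ideal lemmas — explicit enough to pin the boundary correction terms down and to see that they are captured exactly by the $\mathrm{Ext}^2_A$-groups of (iii) and (iv), so that no $\tau_2$-orbit leaves $\tilde{C}$. Everything else — functorial finiteness, the Ext-comparisons of step (2), and the bijectivity bookkeeping in step (3) — should be routine once this comparison is in hand; conditions (i)--(ii) are there precisely to make the comparison functors well-behaved, and (iii)--(iv) are precisely the statements that the comparison respects $\tilde{C}$.
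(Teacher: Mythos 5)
Your overall architecture (verify the generator--cogenerator condition, then $1$-rigidity, then closure under $\tau_2^{\pm}$ by comparing minimal (co)resolutions over $A$ and over $B=A/\langle e\rangle$, with (iii)--(iv) accounting for the boundary corrections) matches the paper's, and your step (3) is essentially the paper's three-case analysis in sketch form. The genuine gap is in your step (2), in the cross terms $\mathrm{Ext}^1_A(DA,M)$ and $\mathrm{Ext}^1_A(M,A)$ for $M\in\mathcal{C}$. The indecomposable summands of $A(1-e)$ and of $D((1-e)A)$ are in general neither projective-injective nor objects of $\mathrm{mod}(B)$ (they are not annihilated by $e$), so these pairs are not disposed of by your first three bullet cases. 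Your proposed remedy --- cutting such a summand $N$ by $0\to\langle e\rangle N\to N\to N/\langle e\rangle N\to0$ --- does not close: $\langle e\rangle N$ is only a quotient of a module in $\mathrm{add}(Ae)$, not itself in $\mathrm{add}(Ae)$, so hypothesis (ii) gives no control over the Ext groups involving it; and $N/\langle e\rangle N$ is an arbitrary $B$-module rather than an object of $\mathcal{C}$, while a $2$-\emph{pre}cluster-tilting $\mathcal{C}$ only gives $\mathrm{Ext}^1_B(\mathcal{C},\mathcal{C})=0$, not vanishing against all of $\mathrm{mod}(B)$. So neither outer term of the resulting long exact sequence can be killed.

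What the paper does instead --- and what your plan is missing --- is to use hypothesis (i) not merely as the single vanishing $\mathrm{Ext}^1_A(DA,A)=0$ but as the activation condition for the higher Auslander--Reiten duality of Proposition \ref{lada}: it yields $\mathrm{Ext}^1_A(DA,N)\cong D\mathrm{Ext}^1_A(N,(\tau_2)_ADA)$ and, dually, $\mathrm{Ext}^1_A(M,A)\cong D\mathrm{Ext}^1_A((\tau_2^-)_AA,M)$. Conditions (iii) and (iv) then place $(\tau_2^-)_AA$ and $(\tau_2)_ADA$ inside $\mathcal{C}$, where the rigidity of $\mathcal{C}$ over $B$ (transported to $A$ by the degree-one Auslander--Platzeck--Todorov comparison, which is all that is needed --- your asserted degree-$2$ comparison from (ii) alone is both unjustified and unnecessary) finishes the computation. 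Without this duality step your argument for condition (P\ref{precliii}) does not go through, and (iii)--(iv) would appear only in your step (3) rather than doing the double duty they must.
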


The final two conditions have a combinatorial meaning in terms of the relations in the algebra. 
For a $2$-(internally)-Calabi Yau algebras, for example the preprojective algebra above, we expect significant simplifications of the above conditions. Likewise if $\mathrm{proj.dim}(S)=\mathrm{inj.dim}(S)=1$, which is often the case for a simple module over a higher Nakayama algebra \cite{fab}.

\section{Background and Notation}
Consider a finite-dimensional algebra $\nnw{A}$ over a field $K$, and fix a positive integer $d$. We will assume that $\nnw{A}$ is of the form $KQ/I$, where $KQ$ is the path algebra over some quiver $Q$ and $I$ is an admissible ideal of $KQ$. For two arrows in $Q$, $\alpha:i\rightarrow j$ and $\beta:j\rightarrow k$, we denote their composition as $\beta\alpha:i\rightarrow k$. Let $\nnw{A}^\mathrm{op}$ denote the opposite algebra of $\nnw{A}$.
An $\nnw{A}$-module will mean a finitely-generated left $\nnw{A}$-module; by $\mathrm{mod}(\nnw{A})$ we denote the category of $\nnw{A}$-modules. The functor $D=\mathrm{Hom}_K(-,K)$ defines a duality; by $\otimes$ we mean $\otimes_K$ and we denote the syzygy by $\Omega$. \jjm{Denote by $\nu:= D\nnw{A}\otimes_{\nnw{A}}-\cong D\mathrm{Hom}_{\nnw{A}}(-,\nnw{A})$ the Nakayama functor in $\mathrm{mod}(\nnw{A})$}. Let $\mathrm{add}(M)$ be the full subcategory of $\mathrm{mod}(\nnw{A})$ composed of all $\nnw{A}$-modules isomorphic to direct summands of finite direct sums of copies of $M$.

\subsection{\jdm{Higher} precluster-tilting subcategories}\label{precl-sect}

\begin{definition}\cite[Definition 2.2]{iy1}
For a finite-dimensional algebra $\nnw{A}$, a module $M\in\mathrm{mod}(\nnw{A})$ is a \emph{$d$-cluster-tilting module} if it satisfies the following conditions:
\begin{align*}
\mathrm{add}(M)&=\{X\in\mathrm{mod}(\nnw{A})\mid \mathrm{Ext}^i_{\nnw{A}}(M,X)=0\ \forall\ 0< i<d \}.\\
\mathrm{add}(M)&=\{X\in\mathrm{mod}(\nnw{A}) \mid \mathrm{Ext}^i_{\nnw{A}}(X,M)=0\ \forall\ 0< i<d \}.
\end{align*} 
In this case $\mathrm{add}(M)$ is a \emph{$d$-cluster-tilting subcategory} of $\mathrm{mod}(\nnw{A})$.
\end{definition}
Define  $\tau_d:=\tau\Omega^{d-1}$ to be the \emph{$d$-Auslander--Reiten translation} and  $\tau_{d}^-:=\nw{\tau^{-}\Omega^{-(d-1)}}$ to be the \emph{inverse $d$-Auslander--Reiten translation}.



\begin{definition}\jdm{\cite[Definition 3.2]{is}}
For a finite-dimensional algebra $\nnw{A}$, a module $M\in\mathrm{mod}(\nnw{A})$ is \emph{$d$-precluster tilting} if it satisfies the following conditions:
\begin{enumerate}[(P1)]
\item The module $M$ is a generator-cogenerator for $\mathrm{mod}(\nnw{A})$. \label{precli}
\item \nw{We have $\tau_dM \in \mathrm{add}(M)$ and $\tau^-_dM\in \mathrm{add}(M)$.}\label{preclii}
\item There is an equality $\mathrm{Ext}^i_{\nnw{A}}(M,M)=0$ for all $0<i<d$.\label{precliii}
\end{enumerate}
\end{definition}
 For a $d$-precluster-tilting module $M$, the subcategory $\mathrm{add}(M)\subseteq \mathrm{mod}(\nnw{A})$ is called a \emph{$d$-precluster-tilting subcategory}.

\begin{proposition}\cite[Theorem 1.5]{iy1}\label{lada}
We have the following 
\begin{itemize}
\item If $\mathrm{Ext}^i_A(M,A)=0$ for all $0<i<d$, then $\mathrm{Ext}^i_A(M,N)\cong D\mathrm{Ext}^{d-i}_A (N,\tau_dM)$ for all $M\in\mathrm{mod}(A)$ and all $0<i<d$.
\item If $\mathrm{Ext}^i_A(DA,N)=0$ for all $0<i<d$, then $\mathrm{Ext}^i_A(M,N)\cong D\mathrm{Ext}^{d-i}_A (\tau_d^-N,M)$ for all $N\in\mathrm{mod}(A)$ for all $0<i<d$.
\end{itemize}
\end{proposition}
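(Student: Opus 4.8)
The plan is to deduce the duality from the single functorial Nakayama isomorphism $\mathrm{Hom}_A(N,\nu P)\cong D\mathrm{Hom}_A(P,N)$, valid for finitely generated projective $P$, together with a homological concentration argument; I would prove the first bullet in detail and obtain the second formally. For the reduction, note that $D$ carries $\mathrm{Ext}^i_A(M,N)$ to $\mathrm{Ext}^i_{A^{\mathrm{op}}}(DN,DM)$ and carries the hypothesis $\mathrm{Ext}^i_A(DA,N)=0$ to $\mathrm{Ext}^i_{A^{\mathrm{op}}}(DN,A^{\mathrm{op}})=0$, which is exactly the hypothesis of the first part over $A^{\mathrm{op}}$ applied to the module $DN$. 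Since $D$ interchanges $\tau_d$ over $A^{\mathrm{op}}$ with $\tau_d^-$ over $A$, reading the first formula (over $A^{\mathrm{op}}$, with second variable $DM$) back through $D$ produces precisely $\mathrm{Ext}^i_A(M,N)\cong D\mathrm{Ext}^{d-i}_A(\tau_d^-N,M)$.

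For the first isomorphism, fix a projective resolution $\cdots\to P_1\to P_0\to M\to0$. The Nakayama isomorphism is natural, so applied degreewise it gives an isomorphism of complexes $\mathrm{Hom}_A(N,\nu P_\bullet)\cong D\mathrm{Hom}_A(P_\bullet,N)$, whence $H_i\,\mathrm{Hom}_A(N,\nu P_\bullet)\cong D\mathrm{Ext}^i_A(M,N)$; the problem is thus reduced to computing the homology of $\mathrm{Hom}_A(N,\nu P_\bullet)$. I would then feed in the hypothesis through the complex $\nu P_\bullet=DA\otimes_A P_\bullet$ itself, whose homology is $H_j(\nu P_\bullet)\cong\mathrm{Tor}^A_j(DA,M)\cong D\mathrm{Ext}^j_A(M,A)$. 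The assumption $\mathrm{Ext}^j_A(M,A)=0$ for $0<j<d$ therefore says that, after brutally truncating above degree $d$, the bounded complex of injectives $I^\bullet=(0\to\nu P_d\to\cdots\to\nu P_0\to0)$ has homology only in degrees $0$ and $d$, namely $H_0(I^\bullet)=\nu M$ and $H_d(I^\bullet)=\ker(\nu P_d\to\nu P_{d-1})$; the latter is exactly $\tau_dM=\tau\Omega^{d-1}M$, since $P_d\to P_{d-1}\to\Omega^{d-1}M\to0$ is a projective presentation. In $\mathrm{D}^b(\mathrm{mod}\,A)$ the associated truncation triangle reads $\nu M\to I^\bullet\to\tau_dM[d]\to\nu M[1]$.

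The crux is that this triangle splits: its class lies in $\mathrm{Hom}_{\mathrm{D}^b}(\tau_dM[d],\nu M[1])\cong\mathrm{Ext}^{1-d}_A(\tau_dM,\nu M)$, which vanishes because $1-d<0$ for $d\ge2$. Hence $I^\bullet\cong\nu M\oplus\tau_dM[d]$, and as a bounded complex of injectives it computes $R\mathrm{Hom}_A(N,-)$, so that
\[
H_i\,\mathrm{Hom}_A(N,I^\bullet)\cong\mathrm{Ext}^{-i}_A(N,\nu M)\oplus\mathrm{Ext}^{d-i}_A(N,\tau_dM).
\]
For $0<i<d$ the first summand vanishes, and since brutal truncation above degree $d$ leaves $H_i$ unchanged for $i\le d-1$, this equals $H_i\,\mathrm{Hom}_A(N,\nu P_\bullet)\cong D\mathrm{Ext}^i_A(M,N)$. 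Dualising gives $\mathrm{Ext}^i_A(M,N)\cong D\mathrm{Ext}^{d-i}_A(N,\tau_dM)$, as required.

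The main obstacle is not any single computation but the careful handling of the degree-$0$ homology $\nu M$: one must confirm that it contributes nothing to $\mathrm{Ext}$ in the range $0<i<d$, and this is precisely what the split truncation triangle provides. It is here, and only here, that both hypotheses are consumed — the vanishing $\mathrm{Ext}^{<d}_A(M,A)=0$ concentrates the homology into two degrees, while the bound $0<i<d$ simultaneously forces $\mathrm{Ext}^{-i}_A(N,\nu M)=0$ and the inequality $1-d<0$ responsible for the splitting. A reader preferring to avoid derived categories can run the same argument as an explicit dimension shift through the short exact sequences $0\to\tau\Omega^{k}M\to\nu P_{k+1}\to\tau\Omega^{k-1}M\to0$ extracted from $\nu P_\bullet$; the vanishing of the two boundary connecting maps is then the elementary shadow of this splitting, and is again exactly where the Ext-vanishing hypothesis and the range $0<i<d$ are used.
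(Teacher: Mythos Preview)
The paper does not prove this proposition at all: it is quoted from Iyama \cite{iy1} as background and used without argument. So there is no ``paper's proof'' to compare against; I can only assess your argument on its own merits.

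Your overall strategy is the standard one and is sound, but the splitting step contains a genuine error. The canonical truncation maps go the other way: at the chain level one has an inclusion $\tau_dM[d]\hookrightarrow I^\bullet$ (coming from $\ker(\nu P_d\to\nu P_{d-1})\hookrightarrow\nu P_d$) and a surjection $I^\bullet\twoheadrightarrow\nu M$ (coming from $\nu P_0\twoheadrightarrow H_0$). The resulting triangle is therefore
\[
\tau_dM[d]\longrightarrow I^\bullet\longrightarrow \nu M\longrightarrow \tau_dM[d][1],
\]
and its connecting map lives in $\mathrm{Ext}^{d+1}_A(\nu M,\tau_dM)$, which has no reason to vanish (for algebras of infinite global dimension it typically does not). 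Your triangle $\nu M\to I^\bullet\to\tau_dM[d]\to\nu M[1]$ is written in the wrong direction, and the splitting you deduce from $\mathrm{Ext}^{1-d}=0$ is not justified.

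Fortunately the splitting is irrelevant. Applying $\mathrm{Hom}_{\mathrm{D}^b}(N,-[?])$ to the correct triangle and using that $\mathrm{Ext}^{-i}_A(N,\nu M)=0=\mathrm{Ext}^{-i-1}_A(N,\nu M)$ for $i\geq 1$ already gives
\[
D\mathrm{Ext}^i_A(M,N)\;\cong\; H_i\,\mathrm{Hom}_A(N,I^\bullet)\;\cong\;\mathrm{Ext}^{d-i}_A(N,\tau_dM)
\]
for $0<i<d$, exactly as you want. Equivalently, the elementary dimension-shift you sketch at the end (through the short exact sequences $0\to\nu\Omega^{k+1}M\to\nu P_k\to\nu\Omega^kM\to0$, available for $0\le k\le d-2$ precisely by the hypothesis) already does the job without any appeal to a splitting. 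So: drop the splitting claim, keep the long exact sequence, and your proof goes through.
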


\subsection{Homological theory of idempotent ideals}
Now we review the some homological theory of idempotent ideals, as introduced by Auslander, Platzeck and Todorov. Throughout this section we will let $F:=\mathrm{Hom}_A(A/\langle e\rangle,-)$.

\begin{proposition}\cite[Proposition 1.1]{apt}\label{apt11}
Let $N$ be an $A$-module, and let $1\leq d \leq \infty$. Then the following are equivalent:
\begin{enumerate}[(i)]
\item $\mathrm{Ext}^i_A(A/\langle e \rangle, N)=0$ for all $i$ such that $0<i<d$.
\item Let $M$ be in $\mathrm{mod}(A/\langle e \rangle)$. Then there are isomorphisms: $$\mathrm{Ext}^i_{A/\langle e\rangle}(M,FN))\rightarrow \mathrm{Ext}^i_A(M,N)$$  for all $0<i<d$.
\end{enumerate}
\end{proposition}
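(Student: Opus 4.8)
The plan is to treat the two implications separately, with essentially all of the work in (i) $\Rightarrow$ (ii). The backward direction is immediate: applying the assumed isomorphism to $M = A/\langle e\rangle$, which is projective as a module over itself, yields $\mathrm{Ext}^i_A(A/\langle e\rangle, N) \cong \mathrm{Ext}^i_{A/\langle e\rangle}(A/\langle e\rangle, FN) = 0$ for all $0 < i < d$, which is exactly (i).

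For (i) $\Rightarrow$ (ii) I would first record three standard facts about $F = \mathrm{Hom}_A(A/\langle e\rangle, -)$. First, restriction of scalars along $A \twoheadrightarrow A/\langle e\rangle$ is exact and left adjoint to $F$; concretely, any $A$-linear map from an $A/\langle e\rangle$-module $M$ to an $A$-module $N$ has image annihilated by $\langle e\rangle$ and so factors through $FN \subseteq N$, giving a natural isomorphism $\mathrm{Hom}_A(M, N) \cong \mathrm{Hom}_{A/\langle e\rangle}(M, FN)$. Second, since $F$ is a right adjoint of an exact functor it preserves injectivity: if $I$ is an injective $A$-module then $FI$ is an injective $A/\langle e\rangle$-module (equivalently, $\mathrm{Hom}_{A/\langle e\rangle}(-, FI) \cong \mathrm{Hom}_A(-, I)$ is exact on $\mathrm{mod}(A/\langle e\rangle)$). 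Third, $F$ is left exact with right derived functors $R^iF(N) = \mathrm{Ext}^i_A(A/\langle e\rangle, N)$.

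The argument is then a dimension shift, by induction on $i$ (uniformly over the $A$-modules satisfying (i)). Fix a short exact sequence $0 \to N \to I \to N' \to 0$ of $A$-modules with $I$ injective (taken finitely generated, which is possible over an Artin algebra). Applying $F$ and using the third fact, the hypothesis $\mathrm{Ext}^1_A(A/\langle e\rangle, N) = 0$ together with $R^1F(I) = 0$ forces $0 \to FN \to FI \to FN' \to 0$ to be exact, and the long exact sequence for $\mathrm{Ext}^\bullet_A(A/\langle e\rangle, -)$ shows that $N'$ again satisfies (i), with $d$ decreased by one. Since $FI$ is injective over $A/\langle e\rangle$, both $\mathrm{Ext}^\bullet_A(M, -)$ applied to $0 \to N \to I \to N' \to 0$ and $\mathrm{Ext}^\bullet_{A/\langle e\rangle}(M, -)$ applied to $0 \to FN \to FI \to FN' \to 0$ degenerate into dimension-shift isomorphisms $\mathrm{Ext}^i_A(M, N) \cong \mathrm{Ext}^{i-1}_A(M, N')$ and $\mathrm{Ext}^i_{A/\langle e\rangle}(M, FN) \cong \mathrm{Ext}^{i-1}_{A/\langle e\rangle}(M, FN')$ for $i \geq 2$; the inductive hypothesis for $N'$ then transports the desired isomorphism back to $N$ for $2 \leq i < d$. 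For the base case $i = 1$, each of $\mathrm{Ext}^1_A(M, N)$ and $\mathrm{Ext}^1_{A/\langle e\rangle}(M, FN)$ is the cokernel of $\mathrm{Hom}_A(M, I) \to \mathrm{Hom}_A(M, N')$, respectively $\mathrm{Hom}_{A/\langle e\rangle}(M, FI) \to \mathrm{Hom}_{A/\langle e\rangle}(M, FN')$, and the first fact identifies these two maps, hence their cokernels.

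The only point needing care --- the ``main obstacle'' --- is the bookkeeping: one must check that the natural comparison map $\mathrm{Ext}^i_{A/\langle e\rangle}(M, FN) \to \mathrm{Ext}^i_A(M, N)$ (induced by the first fact together with the forgetful functor on resolutions) is exactly the one being transported along the dimension shift, so that the induction produces an isomorphism of functors and not merely an abstract one; and that the exactness of $0 \to FN \to FI \to FN' \to 0$ on the right genuinely uses hypothesis (i) at $i = 1$. A slicker but less elementary alternative is the Grothendieck spectral sequence $E_2^{p,q} = \mathrm{Ext}^p_{A/\langle e\rangle}(M, R^qF(N)) \Rightarrow \mathrm{Ext}^{p+q}_A(M, N)$ (available by the second fact): hypothesis (i) kills the rows $0 < q < d$, so whenever $p + q < d$ the abutment collapses onto the single edge term $E_2^{p,0} = \mathrm{Ext}^p_{A/\langle e\rangle}(M, FN)$, which gives the isomorphism at once.
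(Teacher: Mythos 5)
Your proof is correct. Note that the paper does not actually prove this statement --- it is quoted directly from Auslander--Platzeck--Todorov \cite[Proposition 1.1]{apt} --- so there is no in-paper argument to compare against; your dimension-shifting induction (and the spectral-sequence shortcut) is the standard route and matches the spirit of the original source, including the two points that genuinely need checking: that $F=\mathrm{Hom}_A(A/\langle e\rangle,-)$ preserves injectives via the adjunction with the exact restriction functor, and that exactness of $0\to FN\to FI\to FN'\to 0$ uses hypothesis (i) at $i=1$.
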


A third equivalent condition was incorrectly stated in the original article. The result we will need instead is the following:

\begin{corollary}\label{mcapt}
Let $N$ be an $A$-module, and let $0\rightarrow N \rightarrow I_0\rightarrow I_1\rightarrow \cdots\rightarrow I_d$ be the beginning of a minimal injective coresolution of $N$ and let $0<i<d$. Then each equivalent condition of Proposition \ref{apt11} implies 
$$0\rightarrow FN \rightarrow FI_0\rightarrow \cdots \rightarrow FI_d$$ is the beginning of an injective coresolution of $FN$ in $\mathrm{mod}(A/\langle e\rangle)$.
\end{corollary}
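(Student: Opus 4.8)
The plan is to use the characterization of injective modules over $A/\langle e\rangle$ and the long exact sequence in $\mathrm{Ext}$ to check that the complex $0\to FN\to FI_0\to\cdots\to FI_d$ is exact and that each $FI_j$ is injective in $\mathrm{mod}(A/\langle e\rangle)$. First I would recall that since $F=\mathrm{Hom}_A(A/\langle e\rangle,-)$ has an exact left adjoint (restriction of scalars, or equivalently $A/\langle e\rangle\otimes_A-$ up to the usual identifications), $F$ sends injective $A$-modules to injective $A/\langle e\rangle$-modules; concretely, for $E$ injective over $A$ the functor $\mathrm{Hom}_{A/\langle e\rangle}(-,FE)\cong\mathrm{Hom}_A(-,E)$ restricted to $\mathrm{mod}(A/\langle e\rangle)$ is exact, so $FE$ is injective. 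Thus each $FI_j$ is injective over $A/\langle e\rangle$, which handles the "injective coresolution" requirement once exactness is established.

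For exactness, the key point is that condition (i) of Proposition~\ref{apt11} says $\mathrm{Ext}^i_A(A/\langle e\rangle,N)=0$ for $0<i<d$. I would apply $F=\mathrm{Hom}_A(A/\langle e\rangle,-)$ to the injective coresolution $0\to N\to I_0\to I_1\to\cdots$; the cohomology of the resulting complex $0\to FN\to FI_0\to\cdots$ at position $j$ computes exactly $\mathrm{Ext}^j_A(A/\langle e\rangle,N)$ (using that the $I_j$ are injective, so they are $F$-acyclic in the relevant range — indeed $\mathrm{Ext}^i_A(A/\langle e\rangle,I_j)=0$ for $i>0$). Hence the complex is exact in degrees $0<j<d$, i.e. exact at $FI_0,\ldots,FI_{d-1}$. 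Exactness at $FN$ (i.e. injectivity of $FN\to FI_0$) and at the $FI_0$ spot at the very left is immediate since $F$ is left exact, applied to $0\to N\to I_0\to I_1$. Putting this together, $0\to FN\to FI_0\to\cdots\to FI_d$ is exact except possibly at the last term $FI_d$, which is precisely what it means to be "the beginning of an injective coresolution."

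One technical wrinkle I should be careful about is the bookkeeping at the right-hand end: the statement only claims exactness through $FI_d$ in the sense of "beginning of a coresolution," so I only need exactness at the spots $FI_0,\dots,FI_{d-1}$ and need not say anything about what happens at $FI_d$ and beyond; the vanishing $\mathrm{Ext}^i_A(A/\langle e\rangle,N)=0$ for $0<i<d$ gives exactly this range and no more, so the hypotheses match the conclusion precisely. I also want to note that minimality of the original coresolution is not needed for exactness — it is only invoked if one wants the resulting coresolution of $FN$ to be reasonably small, but the statement as phrased just asks for "an injective coresolution," so I can drop minimality in the argument. The main (minor) obstacle is simply making the identification "cohomology of $F(I_\bullet)$ equals $\mathrm{Ext}^\bullet_A(A/\langle e\rangle,N)$" fully rigorous, i.e. justifying that injective $A$-modules are acyclic for $\mathrm{Hom}_A(A/\langle e\rangle,-)$ in positive degrees — but this is standard since $\mathrm{Ext}^{>0}_A(-,\text{injective})=0$.
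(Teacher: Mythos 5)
Your proposal is correct and follows essentially the same route as the paper: the paper splices short exact sequences of cokernels $C_j$ and uses the vanishing of $\mathrm{Ext}^i_A(A/\langle e\rangle,N)$ for $0<i<d$ plus left exactness of $F$ at the right-hand end, which is exactly your derived-functor computation written out by hand. The only addition on your side is that you explicitly verify $FI_j$ is injective over $A/\langle e\rangle$ via the adjunction, a point the paper leaves implicit; your observation that minimality is not needed is likewise confirmed by the paper's remark that the resulting coresolution need not be minimal.
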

\begin{proof}
Suppose that $\mathrm{Ext}^i_A(A/\langle e \rangle, N)=0$ for all $i$ such that $0<i<d$, and let $C_j:=\mathrm{coker}(I_{j-1}\rightarrow I_{j})$ . Then we have an exact sequence:
$$0\rightarrow FN\rightarrow\cdots \rightarrow FI_{d-2}\rightarrow FC_{d-1}\rightarrow 0$$ since  $\mathrm{Ext}^i_A(A/\langle e \rangle, N)=0$ for all $0<i<d$. Moreover, the exact sequences
\begin{align*}
0&\rightarrow FC_{d-1} \rightarrow FI_{d-1}\rightarrow FC_{d}\\
0&\rightarrow FC_{d}\rightarrow FI_{d}
\end{align*}
combine to give the result.
\end{proof}
We note that the resulting injective coresolution is not necessarily minimal.
There is now a characterisation
 \begin{proposition}\cite[Proposition 1.3]{apt}\label{aptii}
Let $A$ be a finite-dimensional algebra and $e$ an idempotent of $A$. Then the following are equivalent
\begin{enumerate}[(i)]
\item There are isomorphsims  $\mathrm{Ext}^i_{A/\langle e\rangle}(M, N)\rightarrow \mathrm{Ext}^i_A(M,N)$ for all $M,N\in \mathrm{mod}(A/\langle e\rangle)$ and all $0\leq i <d$. 
\item $\mathrm{Ext}^i_A(A/\langle e \rangle, N)=0$ for all $N\in\mathrm{mod}(A/\langle e\rangle)$ for all $i$ such that $0<i<d$.
\item $\mathrm{Ext}^i_A(A/\langle e \rangle, I)=0$ for all $I\in\mathrm{inj}(A/\langle e\rangle)$ for all $i$ such that $0<i<d$.
\end{enumerate}
\end{proposition}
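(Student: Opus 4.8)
The plan is to treat (i) $\Leftrightarrow$ (ii) as an essentially formal consequence of Proposition \ref{apt11}, to observe that (ii) $\Rightarrow$ (iii) is trivial since $\mathrm{inj}(A/\langle e\rangle)\subseteq\mathrm{mod}(A/\langle e\rangle)$, and to reduce the whole statement to the one implication with content, namely (iii) $\Rightarrow$ (ii), which I would handle by a dimension-shifting induction on the homological degree.

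For (i) $\Leftrightarrow$ (ii), the key point is that $F=\mathrm{Hom}_A(A/\langle e\rangle,-)$ restricts to (a functor isomorphic to) the identity on $\mathrm{mod}(A/\langle e\rangle)$: for $N\in\mathrm{mod}(A/\langle e\rangle)$ the evaluation $FN\to N$ at $\bar 1$ is an isomorphism because every element of $N$ is killed by $\langle e\rangle$. Moreover in degree zero $\mathrm{Hom}_{A/\langle e\rangle}(M,N)=\mathrm{Hom}_A(M,N)$ tautologically for $M,N\in\mathrm{mod}(A/\langle e\rangle)$. Composing the isomorphism of Proposition \ref{apt11}(ii) with the map induced by $N\xrightarrow{\sim}FN$ then identifies it, by uniqueness of the relevant natural transformation (a short naturality chase), with the restriction-of-scalars map $\mathrm{Ext}^i_{A/\langle e\rangle}(M,N)\to\mathrm{Ext}^i_A(M,N)$ appearing in (i). Fixing $N\in\mathrm{mod}(A/\langle e\rangle)$, Proposition \ref{apt11} says that $\mathrm{Ext}^i_A(A/\langle e\rangle,N)=0$ for all $0<i<d$ if and only if this map is an isomorphism for all $M\in\mathrm{mod}(A/\langle e\rangle)$ and all $0<i<d$; quantifying over all $N$ and adjoining the automatic degree-zero case yields exactly (i) $\Leftrightarrow$ (ii).

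For (iii) $\Rightarrow$ (ii), I would show by induction on $i$ with $1\le i\le d-1$ that $\mathrm{Ext}^i_A(A/\langle e\rangle,N)=0$ for every $N\in\mathrm{mod}(A/\langle e\rangle)$ (the case $d=1$ being vacuous). Embed $N\hookrightarrow I^0$ into an injective object of $\mathrm{mod}(A/\langle e\rangle)$ with cokernel $N_1$, so that $0\to N\to I^0\to N_1\to 0$ is a short exact sequence of $A/\langle e\rangle$-modules, and hence also of $A$-modules since $\mathrm{mod}(A/\langle e\rangle)$ is a full exact subcategory of $\mathrm{mod}(A)$ closed under subobjects and quotients. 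Applying $\mathrm{Hom}_A(A/\langle e\rangle,-)$, for the base case $i=1$ the long exact sequence contains $\mathrm{Hom}_A(A/\langle e\rangle,I^0)\to\mathrm{Hom}_A(A/\langle e\rangle,N_1)\to\mathrm{Ext}^1_A(A/\langle e\rangle,N)\to\mathrm{Ext}^1_A(A/\langle e\rangle,I^0)$, where the first map is identified via $F\cong\mathrm{id}$ with the surjection $I^0\twoheadrightarrow N_1$ and the last term vanishes by (iii); hence $\mathrm{Ext}^1_A(A/\langle e\rangle,N)=0$. For $2\le i\le d-1$ the sequence gives $\mathrm{Ext}^{i-1}_A(A/\langle e\rangle,N_1)\to\mathrm{Ext}^i_A(A/\langle e\rangle,N)\to\mathrm{Ext}^i_A(A/\langle e\rangle,I^0)$, in which the left term is zero by the inductive hypothesis applied to $N_1\in\mathrm{mod}(A/\langle e\rangle)$ and the right term is zero by (iii), so $\mathrm{Ext}^i_A(A/\langle e\rangle,N)=0$.

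The argument is almost entirely formal, so I do not anticipate a serious obstacle; the two places that need care are (a) verifying that the isomorphism supplied by Proposition \ref{apt11} genuinely coincides with the restriction-of-scalars map once $FN$ is identified with $N$ — the one point calling for an honest, if routine, naturality check — and (b) the bookkeeping involved in passing exact sequences back and forth between $\mathrm{mod}(A/\langle e\rangle)$ and $\mathrm{mod}(A)$, which is harmless because the inclusion is exact and closed under subobjects, quotients and extensions.
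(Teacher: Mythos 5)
The paper does not prove this proposition---it is quoted from Auslander--Platzeck--Todorov without proof---so there is no in-paper argument to compare against. Your proof is correct and standard: the identification $FN\cong N$ for $N\in\mathrm{mod}(A/\langle e\rangle)$ (together with the tautological degree-zero case) does reduce (i)~$\Leftrightarrow$~(ii) to Proposition~\ref{apt11}, and the dimension-shifting induction for (iii)~$\Rightarrow$~(ii), using an injective embedding in $\mathrm{mod}(A/\langle e\rangle)$ and the long exact sequence for $\mathrm{Hom}_A(A/\langle e\rangle,-)$, is exactly the expected argument.
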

In this case, the ideal $\langle e \rangle$ is said to be \emph{$(d-1)$-idempotent}.
A related useful result is the following. For a positive integer $d$, we define $\mathbf{I}_{d}$ to be the full subcategory of $\mathrm{mod}(A)$ consisting of the $A$-modules $M$ having an injective resolution
$$0 \rightarrow M\rightarrow I_0\rightarrow I_1 \rightarrow \cdots $$ with $I_j\in\mathrm{add}(I)$ for all $0\leq i\leq d$.

 \begin{proposition}\cite[Proposition 2.6]{apt}\label{tooapt}
Let $A$ be a finite-dimensional algebra, $e$ an idempotent of $A$ and $I=D(1-e)A$ and $1\leq d<\infty$. Then the following are equivalent
\begin{enumerate}[(i)]
\item $N\in\mathbf{I}_d$. 
\item $\mathrm{Ext}^i_A(M, N)=0$ for all $M\in\mathrm{mod}(A/\langle e\rangle)$ for all $i$ such that $0\leq i<d$.
\item $\mathrm{Ext}^i_A(A/\langle e \rangle, N)=0$ for all $0\leq i<d$.
\end{enumerate}
\end{proposition}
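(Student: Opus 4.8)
The plan is to prove the cycle of implications $(iii)\Rightarrow(i)\Rightarrow(ii)\Rightarrow(iii)$, with essentially all of the content in $(iii)\Rightarrow(i)$. Two elementary observations about $F=\mathrm{Hom}_A(A/\langle e\rangle,-)$ and the module $I$ will be used throughout. First, $FJ=0$ for every $J\in\mathrm{add}(I)$; in fact $\mathrm{add}(I)$ is exactly the class of injective $A$-modules having no nonzero $A/\langle e\rangle$-submodule, and hence $\mathrm{Hom}_A(M,J)=0$ for all $M\in\mathrm{mod}(A/\langle e\rangle)$, since any such map has image annihilated by $\langle e\rangle$ and therefore factors through $FJ=0$. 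Second, for an arbitrary $A$-module $X$ one has $FX=0$ if and only if $E(X)\in\mathrm{add}(I)$: for $(\Leftarrow)$ apply left-exactness of $F$ to $X\hookrightarrow E(X)$, and for $(\Rightarrow)$ observe that $FX=0$ forces $\mathrm{soc}(X)=\mathrm{soc}(E(X))$ to contain no simple module lying in $\mathrm{mod}(A/\langle e\rangle)$, which by the first observation places $E(X)=E(\mathrm{soc}(X))$ in $\mathrm{add}(I)$.

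The two easy implications come first. For $(ii)\Rightarrow(iii)$ take $M=A/\langle e\rangle$. For $(i)\Rightarrow(ii)$, fix an injective coresolution $0\to N\to I_0\to I_1\to\cdots$ witnessing $N\in\mathbf{I}_d$ and apply $\mathrm{Hom}_A(M,-)$ for $M\in\mathrm{mod}(A/\langle e\rangle)$; by the first observation the resulting cochain complex is zero in every degree below $d$, hence so is its cohomology, giving $\mathrm{Ext}^i_A(M,N)=0$ for $0\le i<d$.

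For $(iii)\Rightarrow(i)$ I would pass to the \emph{minimal} injective coresolution $0\to N\to I_0\to I_1\to\cdots$, write $C_0=N$, and let $C_{j}$ be the $j$-th cosyzygy, so that $I_{j}=E(C_{j})$ and there are short exact sequences $0\to C_{j}\to I_{j}\to C_{j+1}\to 0$. By the second observation, it suffices to prove $FC_j=0$ for the range of $j$ demanded by the definition of $\mathbf{I}_d$, which I would do by induction on $j$. The base case $FC_0=FN=\mathrm{Ext}^0_A(A/\langle e\rangle,N)=0$ is hypothesis $(iii)$. For the inductive step, $FC_0=\cdots=FC_j=0$ gives $I_0,\dots,I_j\in\mathrm{add}(I)$ by the second observation, hence $FI_0=\cdots=FI_j=0$ by the first; applying $F$ to $0\to C_j\to I_j\to C_{j+1}\to 0$ and using injectivity of $I_j$ collapses the long exact sequence to an isomorphism $FC_{j+1}\cong\mathrm{Ext}^1_A(A/\langle e\rangle,C_j)$, and dimension shifting back through the injectives $I_{j-1},\dots,I_0$ identifies this with $\mathrm{Ext}^{j+1}_A(A/\langle e\rangle,N)$, which vanishes by $(iii)$ as long as $j+1<d$. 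Thus $FC_{j+1}=0$, the induction continues, and $N\in\mathbf{I}_d$.

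The main difficulty lies in two places. One is the second observation --- really the assertion that $\mathrm{add}(I)$ consists precisely of the injective $A$-modules with no $A/\langle e\rangle$-submodule --- which pins down how the choice of $I$ interacts with $F$. The other, and the step I expect to require the most care, is the index bookkeeping in the cosyzygy induction: one must verify that the $\mathrm{Ext}$-group invoked at step $j$ really is among those assumed to vanish, so that the resulting range of $j$ with $I_j\in\mathrm{add}(I)$ matches the definition of $\mathbf{I}_d$ exactly rather than up to an off-by-one. As an alternative one could feed $(iii)$ into Corollary \ref{mcapt} (or Proposition \ref{aptii}), but that route must then contend with the coresolution of $FN$ it yields not being minimal, which obscures the vanishing of the $FI_j$; the direct induction sketched above sidesteps this.
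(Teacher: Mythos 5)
The paper offers no proof of this proposition---it is quoted from Auslander--Platzeck--Todorov---so your argument can only be judged on its own terms. The overall plan (the cycle $(iii)\Rightarrow(i)\Rightarrow(ii)\Rightarrow(iii)$, with the content concentrated in a cosyzygy induction plus dimension shifting) is the standard and correct strategy. But your ``first observation'' is false for the module $I$ actually named in the statement, and this is not a slip you can wave away: for a primitive idempotent $e_i$ one has $F(D(e_iA))=\mathrm{Hom}_A(A/\langle e\rangle,D(e_iA))\cong D\bigl(e_i(A/AeA)\bigr)$, which vanishes exactly when $e_i$ is a summand of $e$. So the injective $A$-modules with no nonzero $A/\langle e\rangle$-submodule form $\mathrm{add}(D(eA))$, whereas the socle of $D((1-e)A)$ consists precisely of the simples that \emph{do} lie in $\mathrm{mod}(A/\langle e\rangle)$, so $F$ does not kill $\mathrm{add}(I)$ for $I=D((1-e)A)$. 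Your proof therefore establishes the proposition with $I=D(eA)$; as printed, with $I=D((1-e)A)$, the statement is false. (Take $A=k(1\to 2)$, $e=e_1$, $N=S_1$: here $\mathrm{mod}(A/\langle e\rangle)=\mathrm{add}(S_2)$, $\mathrm{Hom}_A(S_2,S_1)=0$ and $S_1$ is injective, so (ii) and (iii) hold for every $d$, yet $S_1$ cannot embed into any module in $\mathrm{add}(D(e_2A))$, whose socle is a sum of copies of $S_2$, so $S_1\notin\mathbf{I}_d$.) You needed to either flag this as a misprint in the statement or not assert an ``observation'' that contradicts the given $I$.

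The second problem is the off-by-one that you explicitly defer to ``index bookkeeping'': it is not reconcilable with the statement as printed. Your induction produces $FC_j=0$, hence $I_j=E(C_j)\in\mathrm{add}(I)$, exactly for $0\le j\le d-1$, because the step yielding $FC_{j+1}=0$ consumes $\mathrm{Ext}^{j+1}_A(A/\langle e\rangle,N)=0$, which hypothesis (iii) supplies only for $j+1<d$. The category $\mathbf{I}_d$ as defined in the paper demands $I_0,\dots,I_d\in\mathrm{add}(I)$, one term more than (iii) controls. This gap is real even after correcting $I$: for $A=k(1\to 2\to 3)$, $e=e_2$, $N=S_2$, $d=1$, condition (iii) holds (it is only the statement $\mathrm{Hom}_A(A/\langle e_2\rangle,S_2)=0$), the minimal coresolution is $0\to S_2\to D(e_2A)\to D(e_1A)\to 0$, and $I_1=D(e_1A)\notin\mathrm{add}(D(e_2A))$. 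So the correct equivalence pairs $\mathrm{Ext}^i_A(A/\langle e\rangle,N)=0$ for $0\le i\le d$ with $I_0,\dots,I_d\in\mathrm{add}(D(eA))$ (equivalently, $0\le i<d$ with $I_0,\dots,I_{d-1}$). In short: your argument is the right one for the correctly stated result, but a complete solution had to identify and repair both transcription errors ($D((1-e)A)$ versus $D(eA)$, and the mismatched index ranges) rather than prove around them silently or leave the range check as an acknowledged loose end.
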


\subsection{Main results}
\begin{customthm}{\ref{elso}}
Let $A$ be a finite-dimensional algebra and $e$ an idempotent of $A$ and $\tilde{\mathcal{C}}\subseteq \mathrm{mod}(A)$ a $2$-precluster-tilting subcategory.
If also
\begin{enumerate}[(i)]
\item $\mathrm{Hom}_A(A/\langle e\rangle,\tilde{\mathcal{C}})\subseteq\tilde{\mathcal{C}}$
\item $A/\langle e\rangle\otimes_A \tilde{\mathcal{C}} \subseteq\tilde{\mathcal{C}}$.
\end{enumerate}
Then $\mathcal{C}:=\tilde{\mathcal{C}}\cap\mathrm{mod}(A/\langle e \rangle)\subseteq \mathrm{mod}(A/\langle e\rangle)$ is $2$-precluster tilting.
\end{customthm}

\begin{proof}
Suppose $\tilde{\mathcal{C}} \subseteq \mathrm{mod}(A)$ is $2$-precluster-tilting. By assumption (i), we have $\mathrm{inj}(A/\langle e\rangle) \in \mathcal{C}$ and by assumption (ii) $\mathrm{proj}(A/\langle e\rangle)\in \mathcal{C}.$ So condition (P\ref{precli}) is satisfied. Secondly, Proposition \ref{aptii}(iii) implies that $\langle e\rangle$ is $1$-idempotent, and hence that $\mathrm{Ext}^1_{A/\langle e\rangle}(M,N)\cong \mathrm{Ext}^1_A(M,N)=0$ for all $M,N\in\mathcal{C}$. Hence condition (P\ref{precliii}) is also satisfied.

Finally, let $N\in\mathcal{C}$ be a non-injective $A$-module and let $$0\rightarrow N \rightarrow \tilde{I_0}\rightarrow \tilde{I_1}\rightarrow \tilde{I_2}$$ be the beginning of a minimal injective coresolution of $N$ in $\mathrm{mod}(A)$. It follows that $(\tau_2^-)_AN= \mathrm{coker}(\tilde{P}_{1}\rightarrow \tilde{P}_2)$. Let $I_j=\mathrm{Hom}_A(A/\langle e\rangle,\tilde{I_j})$. Then Corollary \ref{mcapt} implies that $0\rightarrow N \rightarrow I_0\rightarrow I_1\rightarrow  I_2$ is the beginning of an injective coresolution of $N$ in $\mathrm{mod}(A/\langle e \rangle)$. Since $I_0$ is necessarily minimal, the only case where non-minimality may arise is a trivial map to a summand of $I_2$. It follows $(\tau_2^-)_{A/\langle e \rangle}  N$ is a summand of $\mathrm{coker}(P_{1}\rightarrow P_2)=\mathrm{Hom}_A(A/\langle e\rangle, (\tau^-_2)_AN) \in \mathcal{C}$ by assumption. Hence $(\tau^-_2)_AN\in\mathcal{C}$ and dually $\mathcal{C}$ is closed under $(\tau_2)_A$. So condition (P\ref{preclii}) holds, and $\mathcal{C}\subseteq\mathrm{mod}(A/\langle e\rangle)$ is a $2$-precluster-tilting subcategory.
\end{proof}

We need a technical result:
\begin{lemma}
Let $A$ be a finite-dimensional algebra such that  
$Ae, D(eA)\in\mathrm{proj}$-$\mathrm{inj.}(A)$.

Then for any $M\in\mathrm{mod}(A)$ with minimal injective resolution $$0\rightarrow M\rightarrow I_0\rightarrow I_1\rightarrow I_2,$$ then $$I_2\in\mathrm{add}(D(eA)) \iff \mathrm{Ext}_A^2(A/\langle 1-e \rangle ,M)=0.$$
\end{lemma}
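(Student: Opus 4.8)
The statement is an "if and only if" relating a property of the top of the injective resolution to the vanishing of an Ext-group against $A/\langle 1-e\rangle$. The natural route is to translate $\mathrm{Ext}^2_A(A/\langle 1-e\rangle, M)$ into a statement about the minimal injective coresolution $0\to M\to I_0\to I_1\to I_2$ of $M$, and then read off which summands of $I_2$ can "see" the module $A/\langle 1-e\rangle$. Write $e'=1-e$, so $A/\langle e'\rangle$ is the quotient supported on the vertices corresponding to $e$. Note that $D(eA)$ is, by hypothesis, both projective and injective, and $D(eA)=\mathrm{inj}(A)\setminus\mathrm{inj}(A/\langle e'\rangle)$ in the sense that the indecomposable injective $A$-modules $D(eA)$-summands are precisely the ones killed by... wait, I need to be careful about which idempotent. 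Let me set it up cleanly in the proof.

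**Key steps.** First I would compute $\mathrm{Ext}^2_A(A/\langle e'\rangle, M)$ using the minimal injective coresolution of $M$: it is the cohomology at position $2$ of the complex $\mathrm{Hom}_A(A/\langle e'\rangle, I_0)\to \mathrm{Hom}_A(A/\langle e'\rangle, I_1)\to \mathrm{Hom}_A(A/\langle e'\rangle, I_2)$, i.e. $\mathrm{coker}$ of the last map together with the requirement that it be nonzero; more precisely, since we only truncate at $I_2$, I should argue that $\mathrm{Ext}^2_A(A/\langle e'\rangle, M)=0$ iff the map $\mathrm{Hom}_A(A/\langle e'\rangle, I_1)\to\mathrm{Hom}_A(A/\langle e'\rangle, I_2)$ is surjective onto the relevant kernel — but actually for the purpose at hand, since $I_2$ is injective, $\mathrm{Hom}_A(A/\langle e'\rangle,-)$ applied to the coresolution has its degree-$2$ cohomology controlled by whether $\mathrm{Hom}_A(A/\langle e'\rangle, I_2)$ receives everything. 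The cleaner observation: $\mathrm{Hom}_A(A/\langle e'\rangle, I)=0$ for an indecomposable injective $I$ exactly when $I$ corresponds to a vertex outside the support of $A/\langle e'\rangle$, which is precisely when $I\in\mathrm{add}(D(eA))$. So if $I_2\in\mathrm{add}(D(eA))$, then $\mathrm{Hom}_A(A/\langle e'\rangle, I_2)=0$, and one needs to check this forces $\mathrm{Ext}^2_A(A/\langle e'\rangle,M)=0$ — this uses minimality of the coresolution, so that no summand of $I_2$ splits off a summand of $I_1$, hence the image of the differential into $\mathrm{Hom}_A(A/\langle e'\rangle, I_2)$ being zero is automatic and the cohomology is a subquotient of $0$. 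Conversely, if $I_2\notin\mathrm{add}(D(eA))$, it has an indecomposable summand $I'$ with $\mathrm{Hom}_A(A/\langle e'\rangle, I')\ne 0$; I then need the differential $I_1\to I_2$ to not be split surjective onto $I'$ (minimality again), and to promote this to nonvanishing of the degree-$2$ cohomology, which is where the hypothesis that $D(eA)$ is also \emph{projective} enters — it makes $\mathrm{add}(D(eA))$ closed in a way that lets one peel off the "good" part of $I_1$ and $I_2$ and reduce to a minimal resolution whose top is entirely outside $\mathrm{add}(D(eA))$, where nonvanishing is visible.

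**Main obstacle.** The delicate point is the reverse implication: showing that a summand $I'$ of $I_2$ with $\mathrm{Hom}_A(A/\langle e'\rangle,I')\ne 0$ actually contributes a nonzero class to $\mathrm{Ext}^2_A(A/\langle e'\rangle,M)$, rather than being cancelled by the differential from $\mathrm{Hom}_A(A/\langle e'\rangle, I_1)$. This is exactly where minimality of the injective coresolution must be used in an essential way: minimality says the differential $I_1\to I_2$ has image in $\mathrm{rad}$, so after applying $\mathrm{Hom}_A(A/\langle e'\rangle,-)$ the induced map $\mathrm{Hom}_A(A/\langle e'\rangle,I_1)\to\mathrm{Hom}_A(A/\langle e'\rangle,I_2)$ still lands in the radical of $\mathrm{Hom}_A(A/\langle e'\rangle,I_2)$ as a module over $A/\langle e'\rangle$ — here one wants that $\mathrm{Hom}_A(A/\langle e'\rangle,-)$ restricted to injectives not in $\mathrm{add}(D(eA))$ identifies them with the injective $A/\langle e'\rangle$-modules and is compatible with radicals, using that $D(eA)$ projective-injective guarantees $\langle e\rangle$ (equivalently the relevant idempotent) is $1$-idempotent so that $\mathrm{Ext}$ and $\mathrm{Hom}$ transfer cleanly between $A$ and $A/\langle e'\rangle$. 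Granting that compatibility, the cohomology at $I_2$ is $\mathrm{Hom}_A(A/\langle e'\rangle,I_2)$ modulo a radical submodule, hence nonzero as soon as $\mathrm{Hom}_A(A/\langle e'\rangle, I_2)\ne 0$, which closes the argument. I would structure the write-up as: (1) identify $\mathrm{add}(D(eA))$ with the injectives annihilated by $\mathrm{Hom}_A(A/\langle e'\rangle,-)$; (2) apply $\mathrm{Hom}_A(A/\langle e'\rangle,-)$ to the minimal coresolution and compute the degree-$2$ cohomology; (3) use minimality plus the $1$-idempotency coming from $Ae, D(eA)\in\mathrm{proj\text{-}inj}(A)$ to conclude the cohomology vanishes iff $\mathrm{Hom}_A(A/\langle e'\rangle, I_2)=0$ iff $I_2\in\mathrm{add}(D(eA))$.
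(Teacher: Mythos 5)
Your route differs from the paper's: the paper computes $\mathrm{Ext}^2_A(A/\langle 1-e\rangle,M)$ as the injectively stable $\overline{\mathrm{Hom}}_A(A/\langle 1-e\rangle,\Omega^{-2}(M))$, argues that no nonzero such map can factor through an injective (this is where the projectivity of the modules in $\mathrm{add}(D(eA))$ is used), and then quotes Proposition \ref{tooapt} for the idempotent $1-e$; you instead try to compute the degree-$2$ cohomology of $\mathrm{Hom}_A(A/\langle 1-e\rangle,I_\bullet)$ by hand. That is a legitimate alternative in principle, but your pivotal step (1) is inverted, and the inversion is load-bearing. For an indecomposable injective $I=D(e_jA)$, the space $\mathrm{Hom}_A(A/\langle 1-e\rangle,I)$ is the largest submodule of $I$ annihilated by $\langle 1-e\rangle$; it is nonzero iff it contains the essential socle $S_j$, iff $(1-e)S_j=0$, iff $e_j$ is a summand of $e$, i.e.\ iff $I\in\mathrm{add}(D(eA))$. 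So the indecomposable injectives \emph{killed} by $\mathrm{Hom}_A(A/\langle 1-e\rangle,-)$ are exactly those in $\mathrm{add}(D((1-e)A))$ --- the opposite of what you assert. You flagged this as the point needing care and then resolved it the wrong way, and the error is not cosmetic: with the correct identification your own argument outputs ``$\mathrm{Ext}^2_A(A/\langle 1-e\rangle,M)=0$ iff $I_2$ has \emph{no} summand in $\mathrm{add}(D(eA))$'', the negation of the equivalence you set out to prove. So you cannot simply fix the sign and keep the rest; you must first pin down the $e\leftrightarrow 1-e$ convention in Proposition \ref{tooapt} (where the same ambiguity lives) rather than guess.

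Separately, the direction you correctly single out as delicate --- that $\mathrm{Hom}_A(A/\langle 1-e\rangle,I_2)\neq 0$ forces nonvanishing of the degree-$2$ cohomology --- is not closed by the ``differential lands in the radical, apply Nakayama'' sketch. What must be ruled out is that every map $A/\langle 1-e\rangle\to\Omega^{-2}(M)$ lifts along $I_1\twoheadrightarrow\Omega^{-2}(M)$, equivalently factors through an injective, equivalently extends over $A/\langle 1-e\rangle\hookrightarrow I(A/\langle 1-e\rangle)$. The hypothesis enters exactly here: $I(A/\langle 1-e\rangle)$ lies in $\mathrm{add}(D(eA))$ and is therefore projective, so any map factoring through it does lift to $I_1$; the task is to show such factorizations cannot exhaust $\mathrm{Hom}_A(A/\langle 1-e\rangle,\Omega^{-2}(M))$, and ``image contained in a radical'' is neither established in your sketch nor by itself sufficient. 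The paper's own (terse) treatment of this point is precisely its claim that no nonzero map factors through an injective at all; your write-up needs an actual proof of the corresponding statement before step (3) can be considered complete.
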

\begin{proof}
First note
$\mathrm{Ext}^2_A(A/\langle 1-e \rangle,M)\cong \overline{\mathrm{Hom}}_A(A/\langle 1-e \rangle,\Omega^{-2}(M))$. 
Now any morphism $A/\langle 1-e \rangle\rightarrow \Omega^{-2}(M)$ that factors through an injective must factor through an injective summand of $\Omega^{-2}(M)$. This is impossible, since any such summand is in $\mathrm{add}(D(eA))$ and therefore projective by assumption. Hence $\overline{ \mathrm{Hom}}_A(A/\langle 1-e \rangle,\Omega^{-2}(A))\cong\mathrm{Hom}_A(A/\langle 1-e \rangle,\Omega^{-2}(A)).$ So $\mathrm{Ext}^2_A(A/\langle1- e \rangle,M)\cong \overline{\mathrm{Hom}}_A(A/\langle1- e \rangle,\Omega^{-2}(M))\cong\mathrm{Hom}_A(A/\langle1- e \rangle,\Omega^{-2}(M))$  and the result follows from Proposition \ref{tooapt}.
\end{proof}
\begin{customthm}{\ref{masod}}Let $A$ be a finite-dimensional algebra, $e$ an idempotent of $A$ and $\mathcal{C}\subseteq \mathrm{mod}(A/\langle e \rangle)$ a $2$-precluster-tilting subcategory.
If also
\begin{enumerate}[(i)]
\item $\mathrm{Ext}^1_A(DA,A)=0$.
\item $Ae, D(eA)\in\mathrm{proj}$-$\mathrm{inj.}(A)$.
\item There is an equality of sets  $$\{X\in\mathcal{C}|\mathrm{Ext}^2_A(X,J)\ne0\ \forall J\in\mathrm{inj}(A/\langle1- e \rangle)\}=\{(\tau_2^-)_AP|P\in\mathrm{proj}(A)\setminus \mathrm{proj}(A/\langle e \rangle)\}.$$
\item There is an equality of sets    $$\{X\in\mathcal{C}|\mathrm{Ext}^2_A(A/\langle 1-e \rangle,X)\ne0\}=\{(\tau_2)_AI|I\in\mathrm{inj}(A)\setminus \mathrm{inj}(A/\langle e \rangle)\}.$$
\end{enumerate}
Then $\mathcal{C}\cup\mathrm{proj}(A)\cup\mathrm{inj}(A)=:\tilde{C}\subseteq \mathrm{mod}(A)$ is $2$-precluster tilting.
\end{customthm}

\begin{proof}

Suppose that $\mathcal{C}\subseteq\mathrm{mod}(A/\langle e\rangle)$ is $2$-precluster-tilting. We have that $\tilde{\mathcal{C}}$ is a generator-cogenerator by construction. Since $\mathrm{Ext}_A^1(DA,A)=0$, Proposition \ref{lada} implies for any $N\in \mathcal{C}\cup\ \mathrm{proj}(A)$ the calculation $$\mathrm{Ext}^1_A(DA, N) \cong D\mathrm{Ext}^{1}(N,(\tau_2)_ADA))\cong 0,$$ since $(\tau_2)_A(DA)\in \mathcal{C}$. Dually $\mathrm{Ext}^1_A(M,A)=0$ for all $M\in\mathcal{C}$. By construction $\mathrm{Ext}_A^1(P,I)=0$ for any $P\in\mathrm{proj}(A/\langle e \rangle)$ and $I\in\mathrm{inj}(A/\langle e \rangle)$ (there are no arrows in the quiver of $A$ from a sink in the quiver of $A/\langle e\rangle$ to a source in the quiver of $A/\langle e\rangle$). So Proposition \ref{aptii} implies $0=\mathrm{Ext}^1_{A/\langle e \rangle}(M,N)=\mathrm{Ext}_A^1(M,N)$ for all $M,N\in\mathcal{C}$. Hence condition (P\ref{precliii}) is also satisfied.

Finally, we have to show closure under $(\tau_2^-)_A$, we do this for a given $X\in\mathcal{C}$ with minimal injective resolution in $\mathrm{mod}(A/\langle e\rangle)$: $0\rightarrow X\rightarrow I_0\rightarrow I_1\rightarrow I_2$, where we set $\tilde{J},\tilde{J}^\prime$ to be injective $A$-modules such that $\mathrm{Hom}_A(A/\langle e \rangle, \tilde{J})=0$.
\begin{enumerate}[(a)]
\item  $0\rightarrow X\rightarrow \tilde{I}_0\rightarrow \tilde{I}_1(\oplus \tilde{J})\rightarrow \tilde{I}_2  \oplus \tilde{J}^\prime$ is a minimal injective resolution of $X$ in $\mathrm{mod}(A)$: then by assumption $(\tau_d^-)_AX\in\mathrm{inj}(A)\in\tilde{\mathcal{C}}$.
\item  $0\rightarrow X\rightarrow \tilde{I}_0\rightarrow \tilde{I}_1(\oplus \tilde{J})\rightarrow \tilde{I}_2$ is an injective resolution of $X$ in $\mathrm{mod}(A)$: then simply $(\tau_d^-)_AX\cong (\tau_d^-)_{A/\langle e\rangle}X\in\tilde{\mathcal{C}}$.
\item  $0\rightarrow X\rightarrow \tilde{I}_0\rightarrow \tilde{J} $ is an injective resolution of $X$ in $\mathrm{mod}(A)$: then $X\in\mathrm{inj}(A/\langle e \rangle)$ and  $(\tau_d^-)_AX\in\mathrm{proj}(A/\langle e\rangle)\in\tilde{\mathcal{C}}$.
\end{enumerate}
Hence $\tilde{\mathcal{C}}$ is closed under $(\tau_d^-)_A$, and dually also under $(\tau_2)_A$. So $\tilde{\mathcal{C}}\subseteq\mathrm{mod}(A)$ is a 2-precluster-tilting subcategory.
\end{proof}

\section{Examples}

In this section we will consider algebras with vertices labelled by subsets of $\{1,2,\ldots,n\}$. There is a canonical way of constructing the algebra. Let $Q_0$ be a set of $(d+1)$-subsets of $\{1,2,\ldots,n\}$.
For $X,Y\in Q_0$, define $Q_1$ by adding arrows $\alpha_{i}(X):X\rightarrow Y$ wherever $X\setminus\{i\}=Y\setminus\{i+1\}$ for some $i\in X$. Let $I$ be the admissible ideal of $KQ$ generated by the elements $$\alpha_j(\alpha_i(I))-\alpha_i(\alpha_j(I)),$$ which range over all $X\in Q_0$. By convention, $\alpha_i(X)=0$ whenever $X$ or $X\cup\{i+1\}\setminus \{i\}$ is not a member of $Q_0$. Hence there are zero relations included in the ideal $I$.

\subsection{Higher Nakayama algebras}

One of the motivating examples comes from higher Nakayama algebras \cite{jk}. In order to define higher Nakayama algebras and define higher cluster-tilting subcategories, Jasso and K\"ulshammer make use of the following result, which motivates our main results.

\begin{lemma}\label{jku}\cite[Lemma 1.20]{jk}
Let $A$ be a finite-dimensional algebra and $\tilde{\mathcal{C}}$ a $d$-cluster-tilting subcategory of $\mathrm{mod}(A)$. Let $e\in A$ be an idempotent such that the following conditions are satisfied:
\begin{itemize}
\item All the projective and all the injective $A/\langle e \rangle$-modules belong to $\mathcal{C}$.
\item Every indecomposable $M\in\mathcal{C}$ which does not lie in $\mathrm{mod}(A/\langle e\rangle)$ is projective-injective.
\end{itemize}
Then $\langle e\rangle$ is a $(d-1)$-idempotent ideal and $\mathcal{C}\subseteq\mathrm{mod}(A/\langle e\rangle)$ is $d$-cluster tilting.
\end{lemma}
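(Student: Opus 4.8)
The plan is to prove Lemma \ref{jku} by reducing it to the already-established homological machinery of Auslander--Platzeck--Todorov, verifying the three defining conditions of a $d$-cluster-tilting subcategory for $\mathcal{C}$ inside $\mathrm{mod}(A/\langle e\rangle)$.

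First I would establish that $\langle e\rangle$ is $(d-1)$-idempotent. By Proposition \ref{aptii}, it suffices to check that $\mathrm{Ext}^i_A(A/\langle e\rangle, I)=0$ for all $I\in\mathrm{inj}(A/\langle e\rangle)$ and all $0<i<d$. Since all injective $A/\langle e\rangle$-modules lie in $\mathcal{C}\subseteq\tilde{\mathcal{C}}$ by the first hypothesis, and $\tilde{\mathcal{C}}$ is $d$-cluster-tilting, I can compute $\mathrm{Ext}^i_A(A/\langle e\rangle, I)$ by taking a $\tilde{\mathcal{C}}$-approximation resolution (or using the vanishing $\mathrm{Ext}^i_A(X,Y)=0$ for $X,Y\in\tilde{\mathcal{C}}$, $0<i<d$). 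The subtlety is that $A/\langle e\rangle$ itself need not lie in $\tilde{\mathcal{C}}$; the right tool is the $d$-cluster-tilting property that yields coresolutions by $\mathrm{add}$ of the cluster-tilting object, together with the second hypothesis that every indecomposable of $\mathcal{C}$ outside $\mathrm{mod}(A/\langle e\rangle)$ is projective-injective, hence has no higher self-extensions and no higher extensions from $A/\langle e\rangle$. I expect one argues: resolve $A/\langle e\rangle$ (as a left $A$-module) against $\tilde{\mathcal{C}}$, and observe that the relevant $\mathrm{Ext}$ groups vanish because the approximating modules pair trivially with injective $A/\langle e\rangle$-modules.

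Next, with $(d-1)$-idempotency in hand, Proposition \ref{aptii}(i) gives isomorphisms $\mathrm{Ext}^i_{A/\langle e\rangle}(M,N)\cong\mathrm{Ext}^i_A(M,N)$ for all $M,N\in\mathrm{mod}(A/\langle e\rangle)$ and $0\le i<d$. In particular $\mathrm{Ext}^i_{A/\langle e\rangle}(\mathcal{C},\mathcal{C})=0$ for $0<i<d$, since these Ext groups agree with the $A$-side ones which vanish by $d$-cluster-tilting of $\tilde{\mathcal{C}}$. This handles the self-orthogonality condition. For the generator-cogenerator condition and the rigidity-characterisation: given $X\in\mathrm{mod}(A/\langle e\rangle)$ with $\mathrm{Ext}^i_{A/\langle e\rangle}(\mathcal{C},X)=0$ for all $0<i<d$, I transport this via Proposition \ref{aptii} (and Corollary \ref{mcapt} / Proposition \ref{apt11} to control coresolutions) to $\mathrm{Ext}^i_A(\tilde{\mathcal{C}},X)=0$, noting the projective-injective indecomposables of $\tilde{\mathcal{C}}$ not already in $\mathcal{C}$ contribute nothing because $X$ is an $A/\langle e\rangle$-module and $\mathrm{Ext}^{>0}$ out of a projective vanishes. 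Then $X\in\tilde{\mathcal{C}}$, and since $X$ is killed by $e$, the second hypothesis forces $X\in\mathcal{C}$. The dual argument using $F=\mathrm{Hom}_A(A/\langle e\rangle,-)$, Proposition \ref{apt11}, and Corollary \ref{mcapt} handles $\mathrm{Ext}^i_{A/\langle e\rangle}(X,\mathcal{C})=0\Rightarrow X\in\mathcal{C}$, using that $FI$ for $I$ injective over $A$ gives injective $A/\langle e\rangle$-modules.

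The main obstacle, I expect, is the first step: showing $\langle e\rangle$ is $(d-1)$-idempotent without assuming $A/\langle e\rangle\in\tilde{\mathcal{C}}$. The delicate point is to leverage the structural hypothesis ``every indecomposable of $\mathcal{C}\setminus\mathrm{mod}(A/\langle e\rangle)$ is projective-injective'' to ensure that when one resolves $A/\langle e\rangle$ or an injective $A/\langle e\rangle$-module by $\tilde{\mathcal{C}}$, the extra summands lying outside $\mathrm{mod}(A/\langle e\rangle)$ are harmless. Once $(d-1)$-idempotency is secured, the remaining verifications are essentially formal consequences of Propositions \ref{apt11} and \ref{aptii} and Corollary \ref{mcapt}, combined with the hypothesis that $\mathrm{proj}(A/\langle e\rangle)$ and $\mathrm{inj}(A/\langle e\rangle)$ lie in $\mathcal{C}$.
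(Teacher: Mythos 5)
First, a point of comparison: the paper itself offers no proof of this statement --- Lemma \ref{jku} is quoted from Jasso--K\"ulshammer \cite[Lemma 1.20]{jk} --- so there is no in-paper argument to measure yours against. Judged on its own terms, your outline of the second half is sound: once $(d-1)$-idempotency is known, transporting $\mathrm{Ext}$ across the quotient via Proposition \ref{aptii}(i) and absorbing the indecomposables of $\tilde{\mathcal{C}}$ outside $\mathrm{mod}(A/\langle e\rangle)$ by using that they are projective (for $\mathrm{Ext}^i_A(-,X)$ into a $A/\langle e\rangle$-module $X$) and injective (for $\mathrm{Ext}^i_A(X,-)$) is exactly the right mechanism; note only that the final membership $X\in\mathcal{C}$ comes from the definition $\mathcal{C}=\tilde{\mathcal{C}}\cap\mathrm{mod}(A/\langle e\rangle)$ rather than from the second hypothesis.

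The genuine gap is in the step you yourself flag as the main obstacle. You propose to prove $\mathrm{Ext}^i_A(A/\langle e\rangle, I)=0$ for $I\in\mathrm{inj}(A/\langle e\rangle)$ and $0<i<d$ by ``resolving $A/\langle e\rangle$ against $\tilde{\mathcal{C}}$ and observing that the relevant $\mathrm{Ext}$ groups vanish.'' This cannot work as stated: the $\tilde{\mathcal{C}}$-resolutions furnished by the $d$-cluster-tilting property (successive right $\tilde{\mathcal{C}}$-approximations) remain exact under $\mathrm{Hom}_A(\tilde{\mathcal{C}},-)$, not under $\mathrm{Hom}_A(-,\tilde{\mathcal{C}})$, so they give no control over $\mathrm{Ext}^i_A(X,N)$ for $N\in\tilde{\mathcal{C}}$; if such a resolution forced this vanishing, every $A$-module would be $(d-1)$-orthogonal to $\tilde{\mathcal{C}}$, which is absurd. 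The observation you are missing is that no detour is needed: as a left $A$-module, $A/\langle e\rangle$ decomposes as a direct sum of the indecomposable projective $A/\langle e\rangle$-modules, all of which lie in $\mathcal{C}\subseteq\tilde{\mathcal{C}}$ by the first hypothesis. Hence $A/\langle e\rangle\in\tilde{\mathcal{C}}$, and since each $I\in\mathrm{inj}(A/\langle e\rangle)$ also lies in $\tilde{\mathcal{C}}$, the required vanishing is immediate from the rigidity of $\tilde{\mathcal{C}}$, after which Proposition \ref{aptii} gives $(d-1)$-idempotency and the rest of your outline closes up.
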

Using this result, Jasso and K\"ulshammer are able to inductively define higher cluster-tilting subcategories. For example, the following higher Nakayama algebra $A$ has quiver

$$\begin{tikzpicture}
\node(a) at (0,0){$01$};
\node(b) at (1,1){$02$};
\node(c) at (2,2){$03$};
\node(d) at (3,3){$04$};
\node(e) at (2,0){$12$};
\node(f) at (3,1){$13$};
\node(g) at (4,2){$14$};
\node(h) at (4,0){$23$};
\node(i) at (5,1){$24$};
\node(j) at (6,2){$25$};
\node(k) at (6,0){$34$};
\node(l) at (7,1){$35$};
\node(m) at (8,2){$36$};
\node(n) at (8,0){$01$};
\node(o) at (9,1){$02$};
\node(p) at (10,2){$03$};
\node(q) at (11,3){$04$};
\node(r) at (10,0){$12$};
\node(s) at (11,1){$13$};
\node(t) at (12,2){$14$};

\path[->] (a) edge (b);
\path[->] (b) edge (c);
\path[->] (c) edge (d);
\path[->] (e) edge (f);
\path[->] (f) edge (g);
\path[->] (h) edge (i);
\path[->] (i) edge (j);
\path[->] (k) edge (l);
\path[->] (l) edge (m);
\path[->] (n) edge (o);
\path[->] (o) edge (p);
\path[->] (p) edge (q);
\path[->] (r) edge (s);
\path[->] (s) edge (t);

\path[->] (b) edge (e);
\path[->] (c) edge (f);
\path[->] (d) edge (g);
\path[->] (f) edge (h);
\path[->] (g) edge (i);
\path[->] (i) edge (k);
\path[->] (j) edge (l);
\path[->] (l) edge (n);
\path[->] (m) edge (o);
\path[->] (o) edge (r);
\path[->] (p) edge (s);
\path[->] (q) edge (t);

\path[-,dotted] (a) edge (e);
\path[-,dotted] (e) edge (h);
\path[-,dotted] (h) edge (k);
\path[-,dotted] (n) edge (k);
\path[-,dotted] (n) edge (r);

\path[-,dotted] (b) edge (f);
\path[-,dotted] (f) edge (i);
\path[-,dotted] (i) edge (l);
\path[-,dotted] (o) edge (l);
\path[-,dotted] (o) edge (s);

\path[-,dotted] (c) edge (g);
\path[-,dotted] (g) edge (j);
\path[-,dotted] (j) edge (m);
\path[-,dotted] (p) edge (m);
\path[-,dotted] (p) edge (t);
\end{tikzpicture}$$
and relations indictated by the dotted arrows. We may easily apply Theorems \ref{elso} and \ref{masod} to the idempotent $e_{04}$, since $S_{04}$ has projective dimension and injective dimension 1.


\subsection{Boundary idempotents}
 Scott found a cluster structure on the Grassmannian $\mathbb{C}[\mathrm{Gr}(k,n)]$, with clusters given by non-crossing $k$-subsets of $[n]$. On the other hand, Oppermann and Thomas \cite{ot} generalised the cluster structure of triangulations of convex polygons to cyclic polytopes.  Combinatorially, a triangulation of a cyclic polytope is given by maximal-by-size collections of non-intertwining subsets, where, given two $k$-subsets $I=\{i_1,i_2,\ldots,i_l\}$ and $J=\{j_1,j_2,\ldots,j_l\}$, then \emph{$I$ intertwines $J$} if \[i_{1} < j_{1} < i_{2} < \dots < i_{l} < j_{l}.\] While no cluster algebra is formed, such triangulations are related to the representation theory of higher Auslander algebras of Dynkin type $A$. 
Two $k$-subsets $I$ and $J$ are said to be \emph{non-crossing} if there do not exist elements $s<t<u<v$ (ordered cyclically) where $s,u\in I-J$, and $t,v\in J-I$. 


Oppermann and Thomas \cite{ot} were able to describe higher Auslander algebras of type $A$ by maximal collections of non-intertwining subsets, and also to triangulations of cyclic polytopes. In \cite{mw}, we extended this to tensor products of higher Auslander algebras of type $A$ by introducing maximal collections of non-$l$-intertwining subsets. Critically, we were only able to find higher precluster-tilting subcategories in general. 

Baur, King and Marsh \cite{bkm} studied dimer algebras on a disc, which are related to maximal non-crossing collections (and tensor products of type $A$ quivers). In their work, boundary idempotents (given by consecutive subsets) play a key role. The criterion in Theorem \ref{masod} gives us hope that we may inductively add these boundary idempotents back in, when we consider non-intertwining and non-$l$-intertwining collections. This is illustrated in the following example:

\begin{example}
Consider the algebra $A$, it can be checked that the idempotent $e=E_{125,236,145,367,147}$ satisfies the conditions for Theorem \ref{elso} and \ref{masod}. $A/\langle e\rangle$ can be described by a maximal non-intertwining collection of $3$-subsets of $\{1,2,\ldots,6\}$ (135,136,146) as well as a semisimple algebra (256,347). So $\mathrm{mod}(A/\langle e\rangle)$ contains a $2$-cluster-tilting subcategory, hence $\mathrm{mod}(A)$ also contains a (pre)cluster-tilting subcategory. 

$$\begin{tikzcd}
256\arrow{dd}\arrow[-,dotted]{rrd}\arrow[-,dotted]{ddr}& 236\arrow{l}\arrow[-,dotted]{dd}&& 367\arrow[-,dotted]{rdd}\arrow{dl}&347\arrow{l}\\
&&136\arrow{ul}\arrow{dr}\arrow[-,dotted]{rru}\arrow[-,dotted]{rrd}\\
125\arrow[-,dotted]{rru}\arrow[-,dotted]{ruu}\arrow{r}&135\arrow{ur}\arrow[-,dotted]{rr}\arrow{dr}&&146\arrow{r}\arrow[-,dotted]{ruu}&147\arrow{uu}\\
&&145\arrow{ur}
\end{tikzcd}$$

\end{example}

\bibliographystyle{amsplain}
\bibliography{extendinging}

\providecommand{\bysame}{\leavevmode\hbox to3em{\hrulefill}\thinspace}
\providecommand{\MR}{\relax\ifhmode\unskip\space\fi MR }
\providecommand{\MRhref}[2]{%
  \href{http://www.ams.org/mathscinet-getitem?mr=#1}{#2}
}
\providecommand{\href}[2]{#2}
\begin{thebibliography}{1}

\bibitem{apt}
M.~Auslander, M.~I. Platzeck, and G.~Todorov, \emph{Homological theory of
  idempotent ideals}, Trans. Amer. Math. Soc. \textbf{332} (1992), no.~2,
  667--692. \MR{1052903}

\bibitem{bkm}
Karin Baur, Alastair~D. King, and Bethany~R. Marsh, \emph{Dimer models and
  cluster categories of {G}rassmannians}, Proc. Lond. Math. Soc. (3)
  \textbf{113} (2016), no.~2, 213--260. \MR{3534972}

\bibitem{iy3}
Osamu Iyama, \emph{Auslander correspondence}, Adv. Math. \textbf{210} (2007),
  no.~1, 51--82. \MR{2298820}

\bibitem{iy1}
\bysame, \emph{Higher-dimensional {A}uslander-{R}eiten theory on maximal
  orthogonal subcategories}, Adv. Math. \textbf{210} (2007), no.~1, 22--50.
  \MR{2298819}

\bibitem{is}
Osamu Iyama and {\O}yvind Solberg, \emph{Auslander-{G}orenstein algebras and
  precluster tilting}, Adv. Math. \textbf{326} (2018), 200--240. \MR{3758429}

\bibitem{jk}
Gustavo Jasso, Julian K{\"u}lshammer, Chrysostomos Psaroudakis, and Sondre
  Kvamme, \emph{Higher nakayama algebras {I}: construction}, Advances in
  Mathematics \textbf{351} (2019), 1139--1200.

\bibitem{fab}
Jordan McMahon, \emph{Fabric idempotents and higher {A}uslander--{R}eiten
  theory}, Journal of Pure and Applied Algebra \textbf{224} (2020), no.~8,
  106343.

\bibitem{mw}
Jordan McMahon and Nicholas~J. Williams, \emph{The combinatorics of tensor
  products of higher auslander algebras of type {A}}, Glasgow Mathematical
  Journal (2020), 1–21.

\bibitem{ot}
Steffen Oppermann and Hugh Thomas, \emph{Higher-dimensional cluster
  combinatorics and representation theory}, J. Eur. Math. Soc. (JEMS)
  \textbf{14} (2012), no.~6, 1679--1737. \MR{2984586}

\end{thebibliography}

\end{document}